\def\Xint#1{\mathchoice
   {\XXint\displaystyle\textstyle{#1}}%
   {\XXint\textstyle\scriptstyle{#1}}%
   {\XXint\scriptstyle\scriptscriptstyle{#1}}%
   {\XXint\scriptscriptstyle\scriptscriptstyle{#1}}%
   \!\int}
\def\XXint#1#2#3{{\setbox0=\hbox{$#1{#2#3}{\int}$}
     \vcenter{\hbox{$#2#3$}}\kern-.6\wd0}}
\def\dashint{\Xint-}
\newcommand{\R}{{\mathbb R}}
\newcommand{\CC}{{\mathfrak C}}
\newcommand{\UU}{{\mathscr U}}
\newcommand{\A}{{\mathcal  A}}
\newcommand{\B}{{\mathcal  B}}
\newcommand{\D}{{\mathcal  D}}
\newcommand{\F}{{\mathcal  F}}
\newcommand{\I}{{\mathcal  I}}
\newcommand{\U}{{\mathcal U}}
\newcommand{\Q}{{\mathcal Q}}
\newcommand{\loc}{{\rm loc}}
\newcommand{\osc}{{\rm osc\,}}
\newcommand{\esssup}{\mathop{\rm ess\,sup}\limits}
\numberwithin{equation}{section}
\def\Xint#1{\mathchoice
    {\XXint\displaystyle\textstyle{#1}}%
     {\XXint\textstyle\scriptstyle{#1}}%
     {\XXint\scriptstyle\scriptscriptstyle{#1}}%
     {\XXint\scriptstyle\scriptscriptstyle{#1}}%
    \!\int}
\def\XXint#1#2#3{{\setbox0=\hbox{$#1{#2#3}{\int}$}
    \vcenter{\hbox{$#2#3$}}\kern-.5\wd0}}
 \newtheorem{thm}{Theorem}[section]
 \newtheorem{lem}[thm]{Lemma}
 \theoremstyle{definition}
 \newtheorem{defn}[thm]{Definition}
 \theoremstyle{remark}
 \numberwithin{equation}{section}
\begin{document}

\title[Quasilinear Parabolic  Problems with Perturbed Coefficients]{Approximation of the Solutions\\ to  Quasilinear Parabolic Problems\\ with Perturbed  $VMO_x$ Coefficients}

\author[R. Rescigno]{Rosamaria Rescigno}


\author[L.G.  Softova]{Lubomira G. Softova}


\date{}

\begin{abstract}
We consider the Cauchy-Dirichlet problem for second-order quasilinear  operators of parabolic type in non-divergence form. The data are Cara\-thé\-o\-dory functions, and the principal part is of $VMO_x$-type with respect to the variables $ (x,t).$  Assuming the existence of a strong solution $u_0,$  we apply the Implicit Function Theorem in a neighbourhood  of this solution to show that small bounded perturbations of the data  lead to small perturbations of the solution $u_0$ itself. Furthermore, we employ the Newton Iteration Procedure to construct an approximating sequence that converges  to  $u_0$ in the corresponding Sobolev space.
\end{abstract}

\maketitle

\section{Introduction}
Let $\Omega \subset \mathbb{R}^n$, $n \geq 2$, be a bounded domain with a $C^{1,1}$-smooth boundary $\partial \Omega$. Define the space-time cylinder  $Q = \Omega \times (0, T)$ for some  $T > 0$,  and let  
$$
\partial_p Q = \big(\partial \Omega \times (0, T)\big) \cup \big(\overline{\Omega} \times \{0\}\big)
$$
denote the parabolic boundary of $Q$.  

We consider the following Cauchy-Dirichlet problem for a second-order quasilinear parabolic equation in non-divergence form:  
\begin{equation} \label{PD}
\begin{cases}
D_t u - a^{ij}(x, t, u, Du) D_{ij} u = f(x, t, u, Du), & \text{a.e. in } Q, \\
u(x, t) = 0, & \text{on } \partial_p Q,
\end{cases}
\end{equation}  
where the  summation convention over  repeated indices is used.  

The functions  $a(x,t,u,\xi)$ and $f(x,t,u,\xi)$ are assumed  to satisfy the {\it Carathéodory conditions}: 
 these are continuous with respect to $(u, \xi) \in \mathbb{R} \times \mathbb{R}^n$ for almost all  $(x,t) \in Q$ and
measurable in $(x,t)$ for all $(u, \xi) \in \mathbb{R} \times \mathbb{R}^n.$  

The maximal regularity theory in Sobolev spaces for linear elliptic and parabolic equations with discontinuous coefficients (see for instance   \cite{CMP,CFL,DK,Pl,Re,Sf} and the references therein), combined with linearization techniques and the \textit{Implicit Function Theorem}, provides a powerful framework for treating  nonlinear problems with perturbed data. These perturbations typically involve functions with  small $L^\infty$ norms (cf.  \cite{GRk,GRk1,PRS,Rk,RS}).

Recently   Krylov \cite{Kr} studied linear parabolic equations with coefficients having  {\it Vanishing 
Mean Oscillation} $(VMO)$ in the spatial variable  $x$ and merely measurable dependence on  the time $t$ and proved   existence of unique  strong solution in the strip $\Omega(T)=\R^n\times(0,T)$ (see also \cite{KimKr,Kr1}).
Later,  Dong and Kim  \cite{DK} extended  Krylov's results  to  higher order elliptic and parabolic systems in various  unbounded and bounded domains with Dirichlet data prescribed on the lateral boundary. 
Further  advancements of this theory can be found  in \cite{ANPS1,ANPS3}, where the authors treat linear and quasilinear problems for parabolic equations with partially $VMO$ coefficients and Venttsel boundary conditions.

This theory  serves as a foundation for the analysis of   nonlinear parabolic  equations  with  discontinuous principal parts of $VMO_x$-type  that are  
additionally perturbed by  small $L^\infty$ functions.
Another kind    of proximity condition,  introduced by  Campanato  \cite{CMP,CMP1}, permits  the  study of nonlinear operators in non-divergence form  that are {\it "near"} to   well-studied linear operators  such  as the Laplacian or  the heat operator  (see also \cite{CMP0,Sf1,Sf2} and the references therein).  The {"nearness condition"} of Campanato has been used by Tarsia  \cite{TRS} to generalize 
 the Implicit Function Theorem. In that paper, the hypothesis of bijectivity of the Frèchet differential  is replaced by the assumption of {\it nearness between the function and an open and injective operator.} This allowed   the author to  prove unique strong solvability of the Dirichlet problem for nonlinear non-variational  systems. 
Other approximation results were  obtained by \ Byun, Wang, and their collaborators   (see \cite{BJW,BS} and the references therein), establishing    local existence, uniqueness, and smooth dependence of solutions on the data for broad classes of nonlinear  equations that are   {\it "asymptotically close to a regular  operator"}  such as the $p$-Laplacian. 

With these analytical results at hand,
  the classical \textit{Newton Iteration Procedure}, known for  its quadratic convergence, provides a constructive approach  to generate  approximating sequences converging to a strong  solution.

 The general goal of this paper  is to show that, under suitable assumptions such as the well-posedness of the Cauchy-Dirichlet problem~\eqref{PD} and the regularity required on the data, the underlying differential operator defines a \textit{Fredholm mapping of index zero}. This property  together with the maximum principle  \cite{N} guarantee that the operator is an \textit{isomorphism} from $  W^{2,1}_p(Q) $  onto $  L^p(Q) $  for all  $p > n + 2.$  
This framework enables a rigorous analysis of the effects of small perturbations in the data. 
Namely, we prove that given a strong solution
$u_0$ of \eqref{PD}, then   small $L^\infty$-perturbations of the coefficients and the source term  lead to a unique perturbed solution $u,$ close to $u_0$ in the  $W^{2,1}_p(Q)$ norm, which depends continuously on the perturbations.

The results obtained here  are particularly relevant for both analytical and numerical investigations of quasilinear parabolic differential equations with discontinuous  coefficients, such as:  
\begin{itemize}
    \item establishing existence and uniqueness of solutions under perturbations;
    \item providing constructive approximation methods via iterative techniques;
    \item ensuring stability of solutions with respect to small data variations.
\end{itemize}  
They have direct  application  in areas   such as   diffusion  in heterogeneous media, nonlinear heat conduction, and incompressible fluid flow, where quasilinear parabolic PDEs naturally arise.

We conclude this section with some  notation used throughout the paper:
\begin{itemize}
    \item $x=(x_1,\dots,x_n)\in \R^n, $  with Euclidean norm   $|x|;$  
     \item $\mathcal{B}_r(x)=\{y\in \R^n: \ |x-y|<r\}$ denotes  an open  ball in $\R^n$ and for any bounded domain $\Omega\subset \R^n$ we write   $\Omega_r(x)=\Omega\cap \B_r(x)$ with $x\in \Omega$ and $r>0;$ 
    \item $
    \mathcal{I}_r(x,t)= \mathcal{B}_r(x)\times (t,t+r^2)$
    is   a \textit{parabolic cylinder} in $\R^n\times\R^+$ and we write
    $\Q_r(x,t)=Q\cap \I_r(x,t)$ with $(x,t)\in Q.$ For any $\tau\in (0,T)$ we denote  $Q_\tau=\Omega\times (0,\tau);$
    
    \item For a function $u:\R^n\times \R\to \R,$  the partial derivatives are denoted by
    $
    D_i u = \partial u/\partial x_i, $  $ D_t u = \partial u/\partial t, $ $ Du = (D_1 u, \dots, D_n u), $  $  D^2 u = \{D_{ij} u\}_{i,j=1}^n. $
    \item For a function $a(x,t,u,\xi):Q\times \R\times \R^n\to \R,$ we  write $a_u$ ad $D_{\xi_l}a$ for the partial derivatives with respect to $u$ and the $l$-th component of $\xi\in \R^n.$
   
    \item By $L^p(Q), W^{2,1}_p(Q), p\in (1,\infty),$ and $W^{1,\infty}_x(Q)$ we denote the classical parabolic Lebesgue and Sobolev spaces with the corresponding norms
    \begin{align*}      
    &\|u\|^p_{p,Q}=\int_{Q} |u(x,t)|^p\,dxdt, \qquad \|u\|_{\infty,Q}=\esssup_{(x,t)\in Q} |u(x,t)|,\\
&\|u\|_{W^{2,1}_p(Q)}=\|u\|_{p,Q}+\|Du\|_{p,Q}+\|D^2u\|_{p,Q}+\|D_tu\|_{p,Q},\\
    &\|u\|_{W_x^{1,\infty}(Q)}= \|u\|_{\infty,Q}+\|D u\|_{\infty,Q};
     \end{align*}
    
    \item  The {\it mean integral} of a function $u$ on $Q$ is denoted  by
    $$
    \Xint-_{Q} u(x,t)\,dxdt=\frac{1}{|Q|}\int_0^T\int_\Omega u(x,t)\,dxdt.
    $$
    
\end{itemize}

\section{Definitions and Auxiliary Results}

To describe the regularity of the data, we introduce the following classes of functions. 
\begin{defn}\label{del-C11}
Let $\mathcal U\subseteq \R\times\R^n,$ and let  $a:Q\times \mathcal U\to \R$ be a Carathéodory function.
\begin{enumerate}
\item 
We say that $a$ is  a $\CC^1(Q\times \U)$-Carathéodory function if: 
\begin{itemize}
\item[($A_1$)] $a(x,t,u,\xi)$ is  continuously differentiable  in $(u,\xi)\in \mathcal U$ for almost every  $ (x,t)\in Q$ and the functions  $a, a_u,$ and $ D_{\xi_l}a$ are Carath\'eodory.  
\item[($A_2$)] The functions $a, a_u,$ and $ D_{\xi_l}a$ are locally uniformly bounded:   for every compact $K\subset \U $   there exists a constant $C_K>0$ such that  
$$
\|a(u,\xi)\|_{\infty,Q} + \|a_u(u,\xi)\|_{\infty,Q} + \sum_{l=1}^n\|D_{\xi_l} a(u,\xi)\|_{\infty,Q} \leq C_K
$$
for  all $(u,\xi)\in K,$ where 
$
\|a(u,\xi)\|_{\infty,Q} := \esssup_{(x,t)\in Q} |a(x,t,u,\xi)|.
$
\item[($A_3$)]  $a, a_u,$ and $ D_{\xi_l}a$ are  locally  uniformly continuous in $(u,\xi)$: 
for  every  $\varepsilon>0$ and any compact $K\subset \mathcal U,$ there exists $\delta=\delta(\varepsilon,K)>0$ such that if   $(u,\xi), (\overline{u},\overline{\xi})\in K$ with $|u-\overline{u}|+|\xi-\overline{\xi}|< \delta ,
$  then
\begin{align*}
    \|a(u,\xi)-a(\overline{u},\overline{\xi})\|_{\infty,Q} + &\|a_u(u,\xi)-a_u(\overline{u},\overline{\xi})\|_{\infty,Q}\\[5pt]
    +&\sum_{l=1}^{n} \|D_{\xi_l}a(u,\xi)-D_{\xi_l}a(\overline{u},\overline{\xi})\|_{\infty,Q} < \varepsilon.
\end{align*}

\end{itemize}

\item  For each compact $K\subset \U $, define the norm
\begin{equation*}
\begin{split}
\|a\|_{\CC^1(Q\times K)}:= \sup_{(u,\xi)\in K} &\Big(\|a(u,\xi)\|_{\infty,Q}\\[5pt]
&+\|a_u(u,\xi)\|_{\infty,Q}+\sum_{l=1}^n \|D_{\xi_l}a(u,\xi)\|_{\infty,Q}\Big).
\end{split}
\end{equation*}
\item We say that  $a$ is  a $\CC^{1,1}(Q\times  \mathcal U)$-Carathéodory function if $a\in \CC^1(Q\times\U)$ and $a, a_u,$ and $ D_{\xi_l}a$ are locally Lipschitz in  $(u,\xi)\in \U.$ That is, for every  compact $K\subset 
    \mathcal{U} ,$  there exists a constant $L=L(a,K)>0$ such that
\begin{align*}
\|a(u,\xi)-a(\overline{u},\overline{\xi})\|_{\infty,Q}
&+\|a_u(u,\xi)-a_u(\overline{u},\overline{\xi})\|_{\infty,Q}\\
&+\sum_{l=1}^n \|D_{\xi_l}a(u,\xi)-D_{\xi_l}a(\overline{u},\overline{\xi})\|_{\infty,Q}\\
&\leq L (|u-\overline{u}|+|\xi-\overline{\xi}|)
\end{align*}
for  all $(u,\xi),(\overline{u},\overline{\xi})\in K.$
\end{enumerate}
\end{defn}

The next class of functions with partially $VMO$ regularity was  introduced by Krilov, who studied  parabolic equations  with coefficients that  are  measurable in time  and have  {\it Vanishing Mean Oscillation}  with  respect to the spatial variable.  These ideas have been used in  \cite{Re} to investigate the strong solvability of  the Cauchy-Dirichlet problem for   quasilinear parabolic equations   under  Krylov-type  regularity assumptions  on the coefficients.

\begin{defn}\label{VMO}
Let  $a\in L^1_\loc(\R^n\times \R).$ We define the \textit{mean oscillation} of $a$ in the spatial variable   $x$ over a parabolic cylinder   $\mathcal{I}_r(x,t)$ by  
$$
   \osc_x(a,\mathcal{I}_r(x,t)):=  \Xint-_{\mathcal{I}_r(x,t)} \Xint-_{\mathcal{B}_r(x)} |a(y ,\tau )- a(z ,\tau )|\,dydzd\tau,
$$
and introduce the function
\begin{equation}\label{eq-modulus}
    a^{\#(x)}(R)=  \sup_{r\leq R}\, \sup_{(x,t)\in\R^{n+1}}  \,\osc_x(a,\mathcal{I}_r(x,t)).
\end{equation}
We say that $a$ belongs to $ VMO_x$ with a $VMO_x$-modulus given by  the quantity  \eqref{eq-modulus}, if 
$$
\lim_{R\to0} a^{\#(x)}(R) =0.
$$
\end{defn}

Our goal is to investigate how   solutions of \eqref{PD} depend on small perturbations of the data.
 To this end,  we introduce   the appropriate functional framework for
the solutions.

In what follows we always consider the exponent  
$p\in (n+2,\infty).$  Define the function space 
\begin{equation}\label{eq-Wp}
W_p(Q)=\Big\{\,u\in W^{2,1}_p(Q), \qquad
u(x,t)=0 \text{ on }\partial_p Q\,\Big\}
\end{equation}
endowed with the  standard parabolic   Sobolev norm (cf.  \cite{LSU}). 
It is worth to note that, by   the embedding properties of parabolic Sobolev spaces (cf. \cite[Lemma 3.3]{LSU}), the   functions in $W_p(Q)$  are  continuous up to the  parabolic boundary, together with their spatial gradients.  
 Hence $Du\in VMO_x\cap L^\infty(Q;\R^n)$ automatically
and  we denote  the $VMO_x$-modulus of the gradient $Du$ by 
$$
Du^{\#(x)}(R):=\sum_{i=1}^n (D_iu)^{\#(x)}(R).
$$

The following lemmas describe the regularity  of the composition operator associated with $a$ and will be central to the subsequent analysis. They are analogous  to  \cite[Lemma~1]{PRS} and \cite[Lemma~3.2]{RS}.
 \begin{lem} \label{lm1}
 Let $a$  be a Carath\'eodry function satisfying  the following conditions:
 \begin{itemize}
     \item[($C_1$)]  For every $(u,\xi)\in \U,$ the function $(x,t)\to a(x,t, u, \xi)$ belongs to $VMO_x\cap L^\infty(Q),$ locally uniformly in $(u,\xi),$ with   $VMO_x$-modulus given by 
\begin{equation}\label{VMOxmod}
a^{\#(x)}_{M}(R) := \sup_{|u|, |\xi| \leq M} \sup_{\substack{(x,t) \in Q\\ r\leq R}} 
\Xint-_{\Q_r(x,t)} \Xint-_{\Omega_r} 
|a(y,\tau,u,\xi) - a(z,\tau,u,\xi)| \, dz  dy  d\tau
\end{equation}
with some positive constant $M.$
     
     \item[($C_2$)]  $a(x,t,u,\xi)$ is locally uniformly continuous in $(u,\xi)$. That is, for each compact   $K\subset   \U,$ there exist a constant $C_K>0$ and a non decreasing  function 
     $$
     \mu_K:(0,+\infty)\rightarrow (0,+\infty), \quad \text{ with } \quad  \lim_{\eta\to 0_+} \mu_K(\eta)=0,
     $$
     such that for all $(u,\xi),(\overline{u},\overline{\xi})\in K$ and  for almost every  $(x,t)\in Q,$ we have
     $$
     |a(x,t,u,\xi)-a(x,t,\overline{u},\overline{\xi})|\leq \mu_K(|u-\overline{u}|)+ C_K|\xi-\overline{\xi}|.
     $$
     \item[($C_3$)] $a(x,t,0,0)\in L^\infty(Q).$
      \end{itemize}
     Then, for every  $u\in W_p(Q),$ the composition 
     $$ 
     a(x,t,u(x,t),Du(x,t))\in VMO_x\cap L^\infty(Q).
     $$
 \end{lem}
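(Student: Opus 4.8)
The plan is to verify the two defining properties of membership in $VMO_x \cap L^\infty(Q)$ separately for the composition $v(x,t) := a(x,t,u(x,t),Du(x,t))$, where $u \in W_p(Q)$. First, boundedness: since $u \in W_p(Q)$ with $p > n+2$, the parabolic embedding (cited after \eqref{eq-Wp}) gives $u \in C(\overline Q)$ with $\|u\|_{\infty,Q} =: M_0 < \infty$, and by definition of $W_p(Q)$ we also have $Du \in L^\infty(Q;\R^n)$ with $\|Du\|_{\infty,Q} =: M_1 < \infty$. Thus the range of $(u(x,t), Du(x,t))$ lies in a fixed compact set $K \subset \U$ (assuming $\U$ is open and contains it; otherwise restrict to the closure). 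Property $(C_1)$ then yields $\|v\|_{\infty,Q} \le \sup_{K}\|a(\cdot,\cdot,u,\xi)\|_{\infty,Q} < \infty$, so $v \in L^\infty(Q)$.

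**Next I would estimate the mean oscillation of $v$ in $x$.** Fix a parabolic cylinder $\I_r(x_0,t_0)$ with $r \le R$. The core computation splits the oscillation $|v(y,\tau) - v(z,\tau)|$ by adding and subtracting $a(y,\tau,u(z,\tau),Du(z,\tau))$ (i.e. freezing the last two arguments at the $z$-value but keeping the first two at the $y$-value), obtaining
\begin{align*}
|v(y,\tau) - v(z,\tau)| &\le |a(y,\tau,u(z,\tau),Du(z,\tau)) - a(z,\tau,u(z,\tau),Du(z,\tau))| \\
&\quad + |a(y,\tau,u(y,\tau),Du(y,\tau)) - a(y,\tau,u(z,\tau),Du(z,\tau))|.
\end{align*}
The first term is controlled, after averaging over $y,z \in \B_r(x_0)$ and $\tau$, by the $VMO_x$ modulus $a^{\#(x)}_{M}(R)$ from \eqref{VMOxmod} with $M = \max(M_0,M_1)$, since for each fixed $\tau$ the values $u(z,\tau), Du(z,\tau)$ are admissible parameters of modulus $\le M$. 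The second term, by the structural continuity estimate $(C_2)$ with $K$ as above, is bounded by $\mu_K(|u(y,\tau)-u(z,\tau)|) + C_K \sum_i |D_i u(y,\tau) - D_i u(z,\tau)|$.

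**The main obstacle is handling the second term.** The sum $\sum_i |D_i u(y,\tau) - D_i u(z,\tau)|$, averaged over $\I_r(x_0,t_0)$ and $\B_r(x_0)$, is exactly (a multiple of) $Du^{\#(x)}(R)$, which tends to $0$ as $R \to 0$ because $Du \in VMO_x$ by hypothesis on $u \in W_p(Q)$ — so this piece is fine. The genuinely delicate part is the term $\mu_K(|u(y,\tau)-u(z,\tau)|)$: since $\mu_K$ is only a modulus of continuity (concave-like behavior not assumed), I cannot simply pull the average inside. Instead I would exploit that $u \in W_p(Q)$ with $p > n+2$ is not merely continuous but Hölder continuous in $x$ (again by the Sobolev embedding for $W^{2,1}_p$), so $|u(y,\tau) - u(z,\tau)| \le c\,\|u\|_{W^{2,1}_p(Q)}\, r^{\gamma}$ for some $\gamma \in (0,1)$, uniformly in $\tau$ and in $y,z \in \B_r(x_0)$. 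Then $\mu_K(|u(y,\tau)-u(z,\tau)|) \le \mu_K(c\,\|u\|_{W^{2,1}_p}\, R^{\gamma})$ pointwise, and since $\mu_K$ is nondecreasing with $\mu_K(0^+)=0$, this bound is independent of the cylinder and vanishes as $R \to 0$.

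**Putting it together**, I take the supremum over $r \le R$ and over all $(x_0,t_0) \in Q$ to get
\[
v^{\#(x)}(R) \le a^{\#(x)}_{M}(R) + \mu_K\!\big(c\,\|u\|_{W^{2,1}_p(Q)}\, R^{\gamma}\big) + C_K\, Du^{\#(x)}(R),
\]
and each of the three terms on the right tends to $0$ as $R \to 0$ — the first by $(C_1)$, the second by the properties of $\mu_K$ together with the Hölder continuity of $u$, the third because $Du \in VMO_x$. Hence $v \in VMO_x$, and combined with the boundedness established first, $v \in VMO_x \cap L^\infty(Q)$, which is the claim. Hypothesis $(C_3)$ is used only to guarantee that the base value $a(x,t,0,0)$ is finite so that local uniform boundedness in $(C_1)$/$(A_2)$-type estimates actually pins down a finite $L^\infty$ bound on $K$; I would remark on this rather than belabor it.
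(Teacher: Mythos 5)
Your proof is correct and follows essentially the same strategy as the paper's: splitting the oscillation of the composition by inserting an intermediate term that freezes the $(u,\xi)$-arguments while varying the spatial argument (your decomposition is the mirror image of the paper's, obtained by swapping the roles of $y$ and $z$, and averages out to the same thing), then controlling the three resulting pieces by $a^{\#(x)}_M(R)$, $\mu_K$ applied to the modulus of continuity of $u$, and $Du^{\#(x)}(R)$ respectively. The only minor divergence is that you bound $|u(y,\tau)-u(z,\tau)|$ via the Hölder estimate $c\|u\|_{W^{2,1}_p}r^\gamma$ from the Sobolev embedding, whereas the paper uses the (qualitative) modulus of continuity $\omega^x_u(R)$; both suffice since all that is needed is $\mu_K(\cdot)\to 0$ as $R\to 0$.
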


 \begin{proof}
Let $M$ be  such that 
$$
\|u\|_{\infty,Q}+\|Du\|_{\infty,Q}\leq M.
$$
By  Sobolev embedding,  $u\in C^0(Q),$ and we denote    $\omega^x_u(r)$ the modulus of continuity of  $u$  with respect to $x$ 
\begin{equation} \label{eq-cont}
\omega^x_u(r) = \sup_{(y,\tau), (z,\tau) \in \I_r(x,t)} |u(y,\tau) - u(z,\tau)|.
\end{equation}

Without loss of generality, we  extend  $a$  as zero outside  $ \overline{Q}.$  We estimate the  mean  oscillation  in $x$ of the   composition $a(x,t,u(x,t),Du(x,t))$  over a parabolic cylinder: 
\begin{align*}
J :=& \, \Xint-_{\I_r(x,t)}
    \Xint-_{\B_r(x)} \big| a(y,\tau,u(y,\tau),Du(y,\tau)) 
     -a(z,\tau,u(z,\tau),Du(z,\tau)) \big| \, dz  dy  d\tau \\[6pt]
\leq  & \, \Xint-_{\I_r(x)}
    \Xint-_{\B_r(x)} \big| a(y,\tau,u(y,\tau),Du(y,\tau)) - a(z,\tau,u(y,\tau),Du(y,\tau)) \big|\, dzdyd\tau\\[6pt]
& +\Xint-_{\I_r(x)}
    \Xint-_{\B_r(x)}\big| a(z,\tau,u(y,\tau),Du(y,\tau)) - a(z,\tau,u(z,\tau),Du(z,\tau)) \big| \, dz  dy  d\tau\\
    =:& \, J_1+J_2.
\end{align*}
By  condition   $ (C_1),$  for any $R\geq r$ we have 
$$
J_1 \leq  a^{\#(x)}_{M}(R).
   $$
   For  $J_2,$  condition $ (C_2),$ the definition   \eqref{eq-cont}, and the regularity of $Du$  yield
$$
J_2 \leq \mu_K(\omega^x_u(R)) + C_K\, Du^{\#(x)}(R).
$$
Combining these estimates gives
$$
  {a}^{\#(x)}(R)
   \leq  a^{\#(x)}_{M}(R)+ \mu_K(\omega^x_u(R)) + C_K Du^{\#(x)}(R),
$$
which tends to $0$ as $R\to 0.$  
Thus $a(x,t,u(x,t),Du(x,t))\in VMO_x\cap L^\infty(Q)$ and the claim follows.
\end{proof}
The next  lemma is proved in 
\cite[Lemma 2]{PRS}.
\begin{lem}\label{lm2}
    Let $\U\subset \R\times\R^n$ be a bounded  open set, 
    and define  
    \begin{align*}
    \UU:=\Big\{ u\in W^{1,\infty}_x(Q) :& \quad \exists\,  K\subset \U \text{ compact such that } \\
    &(u,Du)\in K \text{ for a.a. }(x,t)\in Q\Big\}.
    \end{align*}
    Then:
    \begin{itemize}
        \item[(i)] $\UU$ is  open  in $W_x^{1,\infty}(Q)$.
        \item[(ii)] If $a\in\CC^1(Q\times\U),$  then the operator 
        $$
        \A(u):=a(x,t,u(x,t),Du(x,t))
        $$ 
        defines  a  $C^1$-map from $\UU$ to $L^\infty(Q)$.  Moreover, if $a\in \CC^{1,1}(Q\times \U),$ then the Frech\'et derivative 
        \begin{equation}\label{FDoper}
        \A'(u)v=D_u a(x,t,u,Du)\, v +  D_{\xi_l} a(x,t,u,Du)\, D_l v
        \end{equation}  
        is locally Lipschitz continuous.
     \end{itemize}
\end{lem}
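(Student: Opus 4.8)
The plan is to treat the two assertions in order, reducing everything to pointwise estimates in $L^\infty(Q)$ that can be controlled by the $\CC^1$- (resp. $\CC^{1,1}$-) hypotheses on $a$.

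First, for part (i), I would argue that $\UU$ is open directly from the definition. Fix $u\in\UU$, so there is a compact $K\subset\U$ with $(u(x,t),Du(x,t))\in K$ for a.a.\ $(x,t)$; since $\U$ is open, there is $\rho>0$ with $K_\rho:=\{p\in\R^{n+1}:\dist(p,K)\le\rho\}\subset\U$, and $K_\rho$ is again compact. If $w\in W^{1,\infty}_x(Q)$ satisfies $\|w-u\|_{W^{1,\infty}_x(Q)}<\rho$, then $|w-u|+|Dw-Du|<\rho$ a.e., hence $(w(x,t),Dw(x,t))\in K_\rho\subset\U$ a.e., so $w\in\UU$. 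Thus the ball of radius $\rho$ around $u$ lies in $\UU$.

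For part (ii), I would first verify that $\A$ maps $\UU$ into $L^\infty(Q)$ using ($A_2$): on the compact set associated to $u$, $\|\A(u)\|_{\infty,Q}\le C_K$. To show $\A$ is $C^1$ with derivative \eqref{FDoper}, fix $u\in\UU$ and a compact $K\subset\U$ with $(u,Du)\in K$ a.e.; for $v\in W^{1,\infty}_x(Q)$ small, both $(u,Du)$ and $(u+v,Du+Dv)$ lie in a slightly enlarged compact set $K'\subset\U$ as in part (i). Writing the increment via the fundamental theorem of calculus along the segment $s\mapsto(u+sv,Du+sDv)$,
\begin{equation*}
\A(u+v)-\A(u)-\A'(u)v=\int_0^1\Big(\big(a_u(\cdot,u+sv,Du+sDv)-a_u(\cdot,u,Du)\big)v+\textstyle\sum_l\big(D_{\xi_l}a(\cdot,u+sv,Du+sDv)-D_{\xi_l}a(\cdot,u,Du)\big)D_lv\Big)\,ds,
\end{equation*}
and estimating the $L^\infty(Q)$ norm: the factors $v$ and $D_lv$ contribute $\|v\|_{W^{1,\infty}_x(Q)}$, while the differences of $a_u$ and $D_{\xi_l}a$ are bounded, by the uniform continuity ($A_3$) on $K'$, by $\varepsilon$ once $\|v\|_{W^{1,\infty}_x(Q)}<\delta(\varepsilon,K')$. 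This gives $\|\A(u+v)-\A(u)-\A'(u)v\|_{\infty,Q}=o(\|v\|_{W^{1,\infty}_x(Q)})$, i.e.\ Fréchet differentiability at $u$ with derivative \eqref{FDoper}. Continuity of $u\mapsto\A'(u)$ as a map into $\mathcal L(W^{1,\infty}_x(Q),L^\infty(Q))$ follows the same way: $\|(\A'(u)-\A'(\ol u))v\|_{\infty,Q}\le(\|a_u(\cdot,u,Du)-a_u(\cdot,\ol u,D\ol u)\|_{\infty,Q}+\sum_l\|D_{\xi_l}a(\cdot,u,Du)-D_{\xi_l}a(\cdot,\ol u,D\ol u)\|_{\infty,Q})\|v\|_{W^{1,\infty}_x(Q)}$, and the coefficient tends to $0$ as $\|u-\ol u\|_{W^{1,\infty}_x(Q)}\to0$ by ($A_3$). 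Hence $\A\in C^1(\UU;L^\infty(Q))$.

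Finally, if $a\in\CC^{1,1}(Q\times\U)$, the same estimate for $\A'(u)-\A'(\ol u)$ with ($A_4$) in place of ($A_3$) yields
\begin{equation*}
\|\A'(u)-\A'(\ol u)\|_{\mathcal L(W^{1,\infty}_x(Q),L^\infty(Q))}\le L_{a,K'}\,\|u-\ol u\|_{W^{1,\infty}_x(Q)}
\end{equation*}
for $u,\ol u$ in a fixed ball of $\UU$, which is the asserted local Lipschitz continuity of $\A'$. The main technical point — and the only place requiring care — is the bookkeeping of compact sets: one must check that along the whole segment joining $(u,Du)$ to $(u+v,Du+Dv)$, and for all nearby $u$, the arguments stay in a single compact subset of $\U$ on which the bounds ($A_2$)–($A_4$) are available; this is exactly what the openness argument in part (i) provides, so no genuinely new difficulty arises. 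Since this is precisely \cite[Lemma 2]{RPS}, I would at this point simply cite that reference for the detailed verification.
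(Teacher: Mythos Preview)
Your proposal is correct; the paper itself gives no proof of this lemma but simply cites \cite[Lemma~2]{RPS}, exactly as you do at the end. The sketch you provide---openness of $\UU$ via compact thickenings $K_\rho$, Fr\'echet differentiability via the integral mean-value formula together with condition~$(A_3)$, and local Lipschitz continuity of $\A'$ from~$(A_4)$---is precisely the argument carried out in that reference, so your write-up is in fact more detailed than the paper's treatment.
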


\section{Local Solvability via the Implicit Function Theorem}

We define the following  superposition operators:
\begin{equation}\label{eq-lin}
\begin{split}
&\A^{ij}(u):=a^{ij}(x,t,u, Du),\qquad  \F(u):=f(x,t,u,Du)\\
 & \mathcal{P}(u):=D_tu-\A^{ij}(u) D_{ij}u- \F(u).
   \end{split}
\end{equation}
From  \eqref{eq-lin},  problem \eqref{PD} can be rewritten  in  abstract form:
\begin{equation}\label{PD1}
\mathcal{P}(u)= 0 \  \text{ in } Q, \qquad u=0 \  \text{ on } \partial_p Q.   
\end{equation}

By  formally linearizing  the operator $\mathcal{P}$  at a given function  $u=u_0$ through  its  Fréchet derivative,  we obtain the corresponding linearized  problem:
\begin{equation}\label{FLP}
\begin{cases}
   \mathcal{P}^\prime(u_0) v= D_t v-\A^{ij}(u_0)D_{ij}v&\\
   \phantom{\mathcal{P}^\prime(u_0) v} - 
    (\A^{ij\,\prime}(u_0)D_{ij}u_0+\F'(u_0))v=0 & \text{ in } Q\\
    v \in W_p(Q), & 
\end{cases}
\end{equation}
where  the derivatives  $\A^{ij\,\prime}$ and $\F'$ are understood   in the  Fréchet sense and  computed as in \eqref{FDoper}.

We now consider  the  problem \eqref{PD}  under   the following assumptions:
\begin{itemize}
    \item[$(H_1)$] The functions $a^{ij},  f:Q\times \R\times \R^n \to \R$ are $\CC^{1,1}$-Carath\'eodory functions.
     
      \item[$(H_2)$]  $u_0 \in  W_p(Q)$ is a solution of the problem \eqref{PD}.
     
    \item[$(H_3)$]  There exists a constant $\lambda>0$ such that the following uniform parabolicity  and symmetry conditions hold:
    \begin{align}\label{cnd3}
        \begin{cases}
        \lambda^{-1} |\eta|^2\leq a^{ij}(x,t,u_0, D u_0)\eta_i\eta_j\leq \lambda|\eta|^2, \quad &\forall\ \eta\in \R^n\\
        a^{ij}(x,t,u_0, D u_0)=a^{ji}(x,t,u_0, D u_0) &\forall \ i,j=1,\ldots,n. 
    \end{cases}
    \end{align}
     \item[$(H_4)$]      $a^{ij}(x,t,u_0, D u_0)\in VMO_x$  with  modulus
     $$
      a^{\#(x)}(R)= \sum_{i,j=1}^n  a^{ij\#(x)}(R),  
      $$
      and the source term satisfies $f(x,t,u_0,Du_0)\in L^p(Q).$

      \item[$(H_5)$] There are no non-trivial solutions $v\in W_p(Q)$ of  \eqref{FLP}.
\end{itemize}

Some remarks  regarding the above assumptions are in order.

First,  the symmetry of the matrix $\{a^{ij}(x,t,u_0,Du_0)\}_{i,j=1}^n$  ensures the  essential boundedness of its  components. Moreover, although the coefficients are defined only on the bounded cylinder $Q$, they can be extended as zero to the entire space  $\R^n\times \R$ without altering their $VMO_x$-modulus, as shown in  \cite{Jo} (see also \cite{Kr2}).

Under  assumptions $(H_1)$ and $(H_2)$  the operators $\mathcal{A}^{ij}(u)$ and $\mathcal{F}(u)$ defined by \eqref{eq-lin} are $C^1$-mappings with locally Lipschitz continuous Frèchet derivatives, as  established in  Lemma~\ref{lm2}. 
More precisely,
\allowdisplaybreaks
\begin{equation}\label{eq-operators}
\begin{split}
    &\mathcal{A}^{ij}(u), \, \mathcal{F}(u)  \in C^1( W_p(Q); L^\infty(Q)), \quad
    \mathcal{P}(u) \in C^1( W_p(Q); L^p(Q)), \\[4pt]
    &\|\A^{ij\,\prime}(u) - \A^{ij\,\prime} (\overline{u})\|_{\infty, Q} \leq L_{\mathcal{A}} \|u - \overline{u}\|_{W^{1,\infty}_x(Q)}, \\[4pt]
    &\|\mathcal{F}'(u) - \mathcal{F}'(\overline{u})\|_{\infty, Q} \leq L_{\mathcal{F}} \|u - \overline{u}\|_{W^{1,\infty}_x(Q)}, \\[4pt]
    &\|\mathcal{A}\| := \sum_{i,j=1}^n \|a^{ij}\|_{\CC^1}, \qquad 
    \|\mathcal{F}\| := \|f\|_{\CC^1}.
\end{split}
\end{equation}

The assumption $(H_2)$ is justified by the results in \cite{DK,KimKr,Re}. In fact,  under the assumptions $(H_3)$ and $(H_4),$  the unique solvability  in $W_p(Q),$  $p>1,$   
 of the linear Cauchy-Dirichlet    problem 
$$
\begin{cases}
\displaystyle u_t- a^{ij}(x,t)D_{ij}u=f(x,t) & (x,t)\in Q\\
u=0 & (x,t)\in \partial_p Q
\end{cases}
$$
follows from \cite[Theorem 6]{DK} together with the a priori estimate
$$
\|u\|_{W_p(Q)}\leq C\left(\|f\|_{p,Q}+\|u\|_{p,Q}\right)
$$
(see also 
  \cite[Lemma 2.2]{ANPS1}).  On the other hand, 
  for  $p>n+2,$   the maximum principle \cite[Theorem 1]{N}  implies
  $$
  \|u\|_{\infty,Q}\leq C\|f\|_{n+2,Q}.
  $$ 
  Therefore,  any solution
   $u\in W_p(Q)$ of  the linear problem satisfies the a priori estimate
\begin{equation}\label{rem1}
\|u\|_{W_p(Q)}\leq C\|f\|_{p,Q} \qquad p<n+2.
\end{equation}
Consequently, by the  Sobolev embedding theorems,  the solution $u_0$ is H\"older continuous together  with its gradient (see \cite{LSU}).
Hence, there exists an open set $\D\subset C([0,T]; C^1(\overline \Omega))$ such that $u_0\in \D\cap W_p(Q).$

This way, the 
assumptions $(H_3)$ and $(H_4)$ guarantee that the linear operator 
$$
D_t - \mathcal{A}^{ij}(u_0) D_{ij}: \ W_p(Q) \to L^p(Q), \quad   p>n+2,
$$
is an isomorphism.  By the  results of Dong-Kim  \cite[Theorem~6]{DK} and Krylov  \cite{Kr2},   linear parabolic operators with $VMO_x$ coefficients admit $L^p$-theory (see also \cite[Lemma~2.2]{ANPS1}) and the a priori estimate \eqref{rem1}
 ensures  bounded invertibility of the operator on $W_p(Q)$ and hence  validity of the    maximal regularity property.

The assumption $(H_5)$ ensures the   uniqueness  of the solution to   problem \eqref{FLP}. This condition is natural, since it guarantees uniqueness if the solution of the linearized problem. Indeed,
for linear parabolic equations whit $VMO_x$ coefficients we can rely on  the result of Dong and Kim \cite[Theorem~6]{DK} (see also Krilov \cite{Kr}), which asserts existence and uniqueness of strong solution $u\in W^{2,1}_p(Q), p>1,$ of the Dirichet problem 
\begin{equation*}
\begin{cases}
  D_t u-a^{ij}(x,t)D_{ij}u +\lambda u=f(x,t)\quad & (x,t)\in Q,\\ 
  u=0 & (x,t)\in \partial Q
\end{cases}
\end{equation*}
for which the a priori estimate holds: 
$$
\|D_t u\|_{L^p(Q)}+\|D^2u\|_{L^p(Q)}\leq  c \|f\|_{L^p(Q)}.
$$

Furthermore, this result was used  in  \cite{Re}) to obtain the  unique solvability of the  following Dirichlet problem for quasilinear equations
\begin{equation*}
    \begin{cases}
   D_tu-a^{ij}(x,t,u)D_{ij}u=f(x,t,u,Du)  \qquad &\text{a.e in }Q\,,\\
   u(x,t)=0 & \text{on } \partial_pQ\,.
    \end{cases}
\end{equation*}
We  observe that  conditions $(H_1),(H_3),$ and $(H_4)$ ensure  the validity of the regularity assumptions  required in \cite{Re}.

Finally, 
in the  case  where 
$$
\A^{ij\,\prime}(u_0)D_{ij}u_0+\F'(u_0)\leq 0,
$$ 
the maximum principle \cite{N} guarantees  that $(H_5)$ is automatically  satisfied.

Without loss of generality, we take $u_0=0\in \D$ and fix it in the coefficients of \eqref{PD1} 
  obtaining  the following auxiliary    linear  problem:
\begin{equation}\label{PD1_aux}
    D_t w - \mathcal{A}^{ij}(0) D_{ij} w = \mathcal{F}(0) \  \text{ in } Q,\qquad w=0 \ \text{ on } \partial_p Q
\end{equation}
which admits  unique solution  due to the assumptions $(H_2)-(H_5)$  as explained above. 

Let $U_0\subset \D$ be a neighbourhood of  $u_0=0$ and  $ V_0\subset \D$ a neighbourhood of zero,  such that  
$$
\{u + w : (u, w) \in U_0 \times V_0\} \subset \D.
$$
We interpret  $u$ as a perturbation of the solution   $u_0=0$, and $w$ as a small correction.  Then the pair $(u, w)$ belongs to $(U_0 \cap W_p(Q)) \times V_0$, and we consider the perturbed problem
\begin{equation}\label{NLP}
    D_t(u + w) - \mathcal{A}^{ij}(u + w) D_{ij}(u + w) = \mathcal{F}(u + w).
\end{equation}

We now introduce  the operators:
\allowdisplaybreaks
\begin{equation}\label{oper}
\begin{split}
    \overline{\mathcal{A}}_{ij}(u, w) := &\mathcal{A}_{ij}(u + w) = a^{ij}(x, t, u + w, D(u + w)), \\[4pt]
    \overline{\mathcal{F}}(u, w) :=& \mathcal{F}(u + w) - \mathcal{F}(0) + (\mathcal{A}_{ij}(u + w) - \mathcal{A}_{ij}(0)) D_{ij} w \\[4pt]
    = &f(x, t, u + w, D(u + w)) - f(x, t, 0, 0) \\
    +& \left(a^{ij}(x, t, u + w, D(u + w)) - a^{ij}(x, t, 0, 0)\right) D_{ij} w.
\end{split}
\end{equation}

By assumption   $(H_1)$ and Lemma~\ref{lm2}, these are $C^1$-maps:
\begin{align*}
    \overline{\mathcal{A}}_{ij}(u, w) &\in C^1((U_0 \cap W_p(Q)) \times V_0; L^\infty(Q)), \\[4pt]
    \overline{\mathcal{F}}(u, w) &\in C^1((U_0 \cap W_p(Q)) \times V_0; L^\infty(Q)).
\end{align*}

Using \eqref{PD1_aux}, we can rewrite the problem \eqref{NLP} in the equivalent perturbed form:
\begin{equation}\label{PD_pert}
    D_t u - \overline{\mathcal{A}}_{ij}(u, w) D_{ij} u = \overline{\mathcal{F}}(u, w).
\end{equation}

Since $\overline{\mathcal{A}}_{ij}(0, 0) = \mathcal{A}^{ij}(0)$ and $\overline{\mathcal{F}}(0, 0) = 0$, it follows that the pair $(u, w) = (0, 0) \in U_0 \times V_0$ is a solution of \eqref{PD_pert}.

The following result establishes the   smooth dependence of the solution to \eqref{PD1} under small perturbation of $u_0$.

\begin{thm}\label{Thm4.4}
Let the hypotheses $(H_1)-(H_5)$ hold and 
    let $U_0$ and $V_0$ be as defined above. Then there exist neighbourhoods $U_{1,M} \subset U_0$ and $V_{1,\varepsilon} \subset V_0$, with $(0,0) \in U_{1,M} \times V_{1,\varepsilon}$, and a solution map $\Phi: V_{1,\varepsilon} \to W_p(Q)$ of class $C^1$ such that the pair $(u,w) \in U_{1,M} \times V_{1,\varepsilon}$ is a solution of \eqref{PD_pert} if and only if $u = \Phi(w)$.
\end{thm}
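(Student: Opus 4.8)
The plan is to apply the Implicit Function Theorem to the map
\[
G:(U_0\cap W_p(Q))\times W_0\to L^p(Q),\qquad
G(u,w):=D_tu-\overline{\A}_{ij}(u,w)D_{ij}u-\overline{\F}(u,w),
\]
at the point $(u,w)=(0,0)$, where $G(0,0)=0$ by Remark~\ref{rmk_isom}. First I would record that, by the final assertions of Remark~\ref{rmk_isom} together with Lemma~\ref{lm2}, the superposition operators $\overline{\A}_{ij}$ and $\overline{\F}$ are $C^1$ on $(U_0\cap W_p(Q))\times W_0$ with values in $L^\infty(Q)$ and $L^p(Q)$ respectively; since the linear part $u\mapsto D_tu$ is bounded $W_p(Q)\to L^p(Q)$ and multiplication of an $L^\infty$ function by a $D_{ij}u\in L^p$ is continuous bilinear, $G$ is a $C^1$ map of Banach spaces. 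The IFT then applies once we verify that the partial Fréchet derivative $D_uG(0,0):W_p(Q)\to L^p(Q)$ is a (bounded linear) isomorphism.

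The second step is precisely this isomorphism check. Differentiating $G$ in $u$ at $(0,0)$ and using $\overline{\A}_{ij}(0,0)=\A^{ij}(0)=\A^{ij}(u_0)$, $\overline{\F}(0,0)=0$, and the fact that at $w=0$ the terms in $\overline{\F}$ carrying a factor $D_{ij}w$ drop out, one finds
\[
D_uG(0,0)\,v=D_tv-\A^{ij}(u_0)D_{ij}v-\bigl({\A^{ij}}'(u_0)D_{ij}u_0+\F'(u_0)\bigr)v,
\]
which is exactly the linearized operator $\cP'(u_0)$ of \eqref{FLP}. By the discussion opening Remark~\ref{rmk_isom} — uniform parabolicity $(H_3)$, the $VMO_x$ assumption $(H_4)$, symmetry of the coefficients, and the maximal regularity theory of Krylov--Kim cited in the introduction — this operator is an isomorphism from $W_p(Q)$ onto $L^p(Q)$ for $p>n+2$; the lower-order coefficients ${\A^{ij}}'(u_0)D_{ij}u_0+\F'(u_0)$ lie in $L^\infty(Q)$ by Remark~\ref{rem2}, so they constitute an admissible perturbation that does not destroy the Fredholm-index-zero property, and invertibility follows because $(H_2)$--$(H_3)$ force the kernel to be trivial (the homogeneous problem has only the zero solution, cf. the auxiliary problem \eqref{PD1_aux}).

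Granting these two points, the Implicit Function Theorem yields neighbourhoods $W_1\subset W_0$ of $0$ in the perturbation space and $U_1\subset U_0$ of $0$ in $W_p(Q)$, together with a unique $C^1$ map $\Phi:W_1\to U_1$ with $\Phi(0)=0$, such that for $(u,w)\in U_1\times W_1$ one has $G(u,w)=0$ — equivalently $(u,w)$ solves \eqref{PD_pert} — if and only if $u=\Phi(w)$. Since $\Phi$ takes values in $U_1\subset W_p(Q)$, this is the claimed solution map. I would close by noting that the only genuinely substantive ingredient is the surjectivity half of the isomorphism statement for $\cP'(u_0)$: injectivity is cheap from the structural hypotheses, the $C^1$-regularity of $G$ is a routine consequence of Lemma~\ref{lm2}, but surjectivity rests on the full strength of the $W^{2,1}_p$ maximal regularity theory for parabolic operators with $VMO_x$-in-$x$, merely-measurable-in-$t$ principal part, which is the technical heart imported from \cite{KimKr,Kr,Re}. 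The main obstacle in a fully detailed write-up is therefore the careful invocation (and, if needed, re-derivation via the Fredholm alternative) of that maximal regularity result in the present bounded-cylinder, zero-boundary-data setting.
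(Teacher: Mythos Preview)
Your approach is correct and essentially the same as the paper's: both apply the Implicit Function Theorem to $G(u,w)=D_tu-\overline{\A}_{ij}(u,w)D_{ij}u-\overline{\F}(u,w)$ at $(0,0)$, verify $C^1$-regularity via Lemma~\ref{lm2} and Remark~\ref{rmk_isom}, and invoke the isomorphism of $D_uG(0,0)=\cP'(u_0)$ coming from $(H_3)$--$(H_4)$ and maximal regularity. The only extra element in the paper's proof is an explicit check (via Lemma~\ref{lm1} and a Lipschitz estimate) that the perturbed principal coefficients $\overline{\A}_{ij}(u,w)$ remain in $VMO_x$ for small $(u,w)$, which is not strictly required for the IFT at the single point $(0,0)$ but serves to motivate the concrete choice of the neighbourhoods $U_1,W_1$.
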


\begin{proof}
By $(H_3)$ and $(H_4)$ we have that  $\overline{\A}_{ij}(0,0)=a^{ij}(x,t,0,0) \in VMO_x\cap  L^\infty(Q).$ 
We are going to evaluate the mean oscillation of the superposition operators 
$$
\overline{\A}_{ij}(u,w)= a^{ij}(x,t,u(x,t)+w(x,t), Du(x,t)+Dw(x,t))
$$
 for $(u,v)$ sufficiently close to $(0,0).$   Without loss of generality we extend $a^{ij}$ as zero outside $\overline Q.$
According to 
Lemma~\ref{lm1}  and $(H_1)$ we have  $\overline{\A}_{ij}(u,0)=\A^{ij}(u)\in VMO_x \cap L^\infty(Q)$ for every $u\in W_p(Q)$.
Define the sets:
\begin{equation}\label{U1-W1}
\begin{split}
    U_{1,M} &= \left\{ u \in U_0 : \|u\|_{C([0,T], C^1(\overline{\Omega}))} < M \right\}, \\
    V_{1,\varepsilon} &= \left\{ w \in V_0 : \|w\|_{C([0,T], C^1(\overline{\Omega}))} < \varepsilon \right\}.
\end{split}
\end{equation}

Using $(H_1)$ and the continuity of $u$ and $w,$ we estimate
\allowdisplaybreaks
\begin{align*}
   \mathcal{J}&:= \dashint_{\I_r(x,t)} \dashint_{\B_r(x)} \big| a^{ij}(y,\tau,u(y,\tau) + w(y,\tau), Du(y,\tau) + Dw(y,\tau)) \\
    &\qquad -  a^{ij}(z,\tau,u(z,\tau)+w(z,\tau), Du(z,\tau)+Dw(z,\tau)) \big| \,dz dy d\tau\\
    & \leq \dashint_{\I_r(x,t)} \dashint_{\B_r(x)} \big|
     a^{ij}(y,\tau,u(y,\tau) + w(y,\tau), Du(y,\tau) + Dw(y,\tau))\\
      &\qquad -
      a^{ij}(y,\tau,u(z,\tau) + w(z,\tau), Du(z,\tau) + Dw(z,\tau))
    \big|\,dzdyd\tau\\
     &\qquad+ \dashint_{\I_r(x,t)} \dashint_{\B_r(x)} \big|    a^{ij}(y,\tau,u(z,\tau) + w(z,\tau), Du(z,\tau) + Dw(z,\tau))\\
    &\qquad -  a^{ij}(z,\tau,u(z,\tau)+w(z,\tau), Du(z,\tau)+Dw(z,\tau)) \big| \,dz dy d\tau\\
     & \leq L \Bigg(\dashint_{\I_r(x,t)} \dashint_{\B_r} |u(y,\tau)-u(z,\tau)| +|w(y,\tau)-w(z,\tau)|\, dzdyd\tau\\
   &\qquad + \dashint_{\I_r(x,t)} \dashint_{\B_r} |Du(y,\tau)-Du(z,\tau)|\, dzdyd\tau\\
     &\qquad+ \dashint_{\I_r(x,t)} \dashint_{\B_r} |Dw(y,\tau)-Dw(z,\tau)|\, dzdyd\tau\Bigg)\\
     &\qquad + \dashint_{\I_r(x,t)} \dashint_{\B_r} \big|    a^{ij}(y,\tau,u(z,\tau) + w(z,\tau), Du(z,\tau) + Dw(z,\tau))\\
    &\qquad -  a^{ij}(z,\tau,u(z,\tau)+w(z,\tau), Du(z,\tau)+Dw(z,\tau)) \big| \,dz dy d\tau.
\end{align*}
Then 
\begin{align*}
 \mathcal J    
    &\leq    L\Big(\omega^x_u(r)+ \omega_w^x(r)  + \osc_x (Du, \I_r(x,t)) + \osc_x (Dw, \I_r(x,t)) \Big)\\
    & +\dashint_{\I_r(x,t)} \dashint_{\B_r} \big|    a^{ij}(y,\tau,u(z,\tau) + w(z,\tau), Du(z,\tau) + Dw(z,\tau))\\
    &\qquad -  a^{ij}(z,\tau,u(z,\tau)+w(z,\tau), Du(z,\tau)+Dw(z,\tau)) \big| \,dz dy d\tau.
\end{align*}
Here  $L$ is  the Lipschitz constant of the coefficients  
$\{a^{ij}\}$ which can be  taken as  $\max_{ij} L_{a^{ij}}$ (cf. Definition~\ref{del-C11}). Applying  Lemma~\ref{lm1} to the last term  we get 
$$
   \overline{\A}_{ij}^{\#(x)}(R) := 
   \sup_{r\leq R}\sup_{(x,t)\in Q} \mathcal{J} \to 0 \quad \text{ as } \ R\to 0
    $$
    that gives $ \overline{\A}_{ij}(u,w)\in VMO_x\cap L^\infty(Q).$

Thus  solutions $(u,w)$ of \eqref{PD_pert} can be sought in a neighbourhood of $(0,0)$ in the space $(U_{1,M} \cap W_p(Q)) \times (V_{1,\varepsilon}\cap W_p(Q)),$ and we may  apply the Implicit Function Theorem 
\cite[Theorem~4B]{Z}.

Since $W_p(Q)$ embeds continuously into $C([0,T], C^1(\overline{\Omega})),$ the set $U_{1,M} \cap W_p(Q)$ is open in $W_p(Q)$. Define 
$$
\mathcal{P}(u,w) := D_t u - \overline{\A}_{ij}(u,w) D_{ij} u - \overline{\F}(u,w),
$$
which is a $C^1$-map from $(U_{1,M} \cap W_p(Q)) \times (V_{1,\varepsilon}\cap W_p(Q))$ into $L^p(Q)$.

The  Fréchet derivative of $\mathcal{P}$ with respect to $u$ at $(0,0)$ is 
$$
\mathcal{P}'(0,0)(v) = D_t v - \overline{\A}_{ij}(0,0) D_{ij} v  \qquad v \in W_p(Q)
$$
since $\overline{\F}(0,0)=0.$
By $(H_3)$ and Lemma~\ref{lm2}, the operator  $\mathcal{P}'(0,0)$ is a linear isomorphism. Therefore, the Implicit Function Theorem  implies that there exist $M, \varepsilon>0$ and  a unique $C^1$-map  $\Phi: V_{1,\varepsilon}\to U_{1,M},$ such that  $u = \Phi(w)$  for all  $w \in V_{1,\varepsilon}\cap W_p(Q)$ solving  \eqref{PD_pert}.
\end{proof}

The solution to the  problem \eqref{PD1} cannot, in general, be expected to exist for an arbitrarily long time interval unless additional conditions on the data are imposed.

Let $\tau \in (0,T)$, define $Q_\tau = \Omega \times (0,\tau),$  denote by $u_\tau$ the restriction of $u$ on the  cylinder $Q_\tau,$ and set  $\D_\tau=\{u_\tau : u\in \D\}.$  
We  add  the following  assumption.
\begin{itemize}
\item[$(H_6)$] The mappings  $\A^{ij}(u)$ and $\F(u)$ are {\it Volterra operators,} that is,    for any $\tau\in (0,T)$ and  any $u,v\in \D\cap W_p(Q)$ it holds
$$
\text{ if } \ u_\tau=v_\tau \ \text{ then } \ 
\A^{ij}(u_\tau)= \A^{ij}(v_\tau) \ \text{ and } \  \F(u_\tau)= \F(v_\tau).
$$

\end{itemize}

Note that for all $u\in \D\cap W_p(Q) ,$ the linear continuous operators $\A^{ij\prime}(u)$ and $\F^\prime(u)$ also verify the Volterra property, since the Fréchet derivative of Volterra mapping also has the Volterra property (see for instance \cite{GRk1,Z}).

The next theorem claims local in time solvability of \eqref{PD1}.

\begin{thm}\label{thm_tau}
    Suppose that conditions $(H_1)-(H_6)$ are satisfied. Then there exists $\tau\in (0,T]$, depending on the data, such that 
there exists at least one solution
$ 
u_\tau \in \D_\tau \cap W_p(Q_\tau)
$ 
to the problem \eqref{PD1} restricted on $Q_\tau$.
\end{thm}

\begin{proof}
    Let $v \in W_p(Q)$ be a solution of the auxiliary problem \eqref{PD1_aux}. Since $v(x,0)=0$ and $v$ is continuous in time, it remains small for sufficiently small $\tau > 0.$  More precisely,  by the completely continuous embedding 
(see \cite[Lemma 3.3]{LSU}):
    \begin{equation}\label{eq-emb}
    W_p(Q_\tau) \hookrightarrow C^{0,\alpha}([0,\tau]; C^1(\overline{\Omega})), \quad 0 \leq \alpha \leq 1 - \frac{n+2}{p}, \quad p > n+2,
    \end{equation}
    we obtain, for each $0 < s \leq \tau,$
    \begin{align*}
        \|v(\cdot,s) - v(\cdot,0)\|_{C^1(\overline{\Omega})}
        &\leq \tau^{\alpha/2} \|v\|_{C^{0,\alpha}([0,\tau]; C^1(\overline{\Omega}))} \leq C \tau^{\alpha/2} \|v\|_{W_p(Q_\tau)},
    \end{align*}
    with $C$  independent of $\tau$. Hence 
    \begin{equation}\label{eq-small}
    \|v\|_{C([0,\tau]; C^1(\overline{\Omega}))}\leq  C \tau^{\alpha/2} \|v\|_{W_p(Q_\tau)}<\varepsilon
    \end{equation}
    provided  $\tau\in (0,T]$ is sufficiently small.

Next, let 
$\theta \in C^\infty(\mathbb{R})$ be a cut-off function, $0 \leq \theta \leq 1$, such that for suitable $0<\delta<\tau<T$ we have 
    $$
    \theta(s) =
    \begin{cases}
        1 & \text{for } 0 \leq s \leq \delta , \\
        0 & \text{for } s \geq \tau.
    \end{cases}
    $$
    Then $\theta(s) v \in C([0,T], C^1(\overline{\Omega}))$ and $\theta v\in V_{1,\varepsilon}\cap W_p(Q)$   by \eqref{eq-small}.

    Choosing $w = \theta v$ in \eqref{PD_pert} and $u\in U_{1,M} \subset \D$ so  that $u + \theta v \in \D_\tau$ for all $u \in U_{1,M}\cap W_p(Q_\tau)$, we conclude from Theorem~\ref{Thm4.4} that the function $u = \Phi(\theta v)$ solves 
    $$
        D_t u - \overline{\A}_{ij}(u,\theta v) D_{ij} u = \overline{F}(u,\theta v) \quad   \text{ in  } Q_\tau.
 $$
    By equations \eqref{PD1_aux} and \eqref{oper}, this problem can be rewritten as  
$$
        D_t (u + \theta v) - \A^{ij}(u + \theta v) D_{ij}(u + \theta v) = \F(u + \theta v) \quad \text{ in } Q_\tau.
$$
    Define
    $$
    u_\tau = (u + \theta v)\big|_{Q_\tau}.
    $$
then $u_\tau$ is a solution of the restricted problem 
    \begin{align}\label{3.10}
        \begin{cases}
            D_t u_\tau - \A^{ij}(u_\tau) D_{ij} u_\tau = \F(u_\tau) & \text{in } Q_\tau, \\
            u_\tau \in \D_\tau \cap  W_p(Q_\tau)
        \end{cases}
    \end{align}
    that establishes  existence of a local in time solution. 
\end{proof}

\begin{thm}
    Suppose that $(H_1)-(H_6)$ hold. If $u,v \in \D\cap  W_p(Q)$ are two solutions of \eqref{PD1}, then $u \equiv v$.
\end{thm}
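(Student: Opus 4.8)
The plan is to reduce to a linear problem for the difference $w=u-v$ and then close a maximal-regularity bootstrap over short time cylinders, in the spirit of the proof of Theorem~\ref{thm_tau}. Subtracting the two copies of \eqref{PD1} and inserting $\pm\,a^{ij}(x,t,u,Du)D_{ij}v$ and $\pm\,f$, the difference solves
\begin{align*}
D_t w - \A^{ij}(u)D_{ij}w &= \big(a^{ij}(x,t,u,Du)-a^{ij}(x,t,v,Dv)\big)D_{ij}v \\
&\quad + \big(f(x,t,u,Du)-f(x,t,v,Dv)\big) =: F ,
\end{align*}
together with the homogeneous parabolic boundary condition $w=0$ on $\partial_p Q$; in particular $w(\cdot,0)=0$, hence also $Dw(\cdot,0)=0$ on $\overline\Omega$. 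Since $u,v\in W_p(Q)$ embed into $C([0,T];C^1(\overline\Omega))$, there is $M$ with $\|u\|_{C([0,T];C^1(\overline\Omega))}+\|v\|_{C([0,T];C^1(\overline\Omega))}\le M$, so the $\CC^{1,1}$ structure in $(H_1)$ furnishes Lipschitz constants $L_a,L_f$ (depending only on $M$) with $|F|\le\big(L_a|D^2v|+L_f\big)\big(|w|+|Dw|\big)$ a.e.\ in $Q$. Moreover $\A^{ij}(u)\in VMO_x\cap L^\infty(Q)$ by Lemma~\ref{lm1}, so the principal part $D_t-\A^{ij}(u)D_{ij}$ is of the type handled by the maximal regularity theory (its uniform parabolicity being a point I return to below).

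Next I would localise in time. For $s\in(0,T]$ set $Q_s=\Omega\times(0,s)$. By $(H_3)$, Lemma~\ref{lm2} and the $VMO_x$ maximal regularity theory (Remark~\ref{rmk_isom}), the map $w\mapsto D_tw-\A^{ij}(u)D_{ij}w$ is an isomorphism from $W_p(Q_s)$ onto $L^p(Q_s)$ with a constant that does not deteriorate as $s\downarrow 0$, so $\|w\|_{W_p(Q_s)}\le C\|F\|_{L^p(Q_s)}$. Hölder's inequality and $\|D^2v\|_{L^p(Q)}<\infty$ give $\|F\|_{L^p(Q_s)}\le K_0\big(\|w\|_{\infty,Q_s}+\|Dw\|_{\infty,Q_s}\big)$ with $K_0$ independent of $s$. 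Finally, the parabolic embedding $W_p(Q_s)\hookrightarrow C^{0,\alpha}([0,s];C^1(\overline\Omega))$ with $s$-independent constant — precisely the estimate used in the proof of Theorem~\ref{thm_tau} — together with $w(\cdot,0)=0=Dw(\cdot,0)$, yields $\|w\|_{\infty,Q_s}+\|Dw\|_{\infty,Q_s}\le Cs^{\alpha/2}\|w\|_{W_p(Q_s)}$. Chaining the three inequalities gives $\|w\|_{W_p(Q_s)}\le C_1 s^{\alpha/2}\|w\|_{W_p(Q_s)}$ with $C_1$ independent of $s$; choosing $s=s_0$ with $C_1 s_0^{\alpha/2}<1$ forces $w\equiv 0$ on $Q_{s_0}$.

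To conclude, I would iterate on the slabs $\Omega\times(ks_0,(k+1)s_0)$, $k=0,1,2,\dots$: at each step the vanishing of $w$ and $Dw$ at the left endpoint (inherited from the previous slab) plays the role of the zero initial trace, and since none of $s_0$, $C_1$, $K_0$, or the embedding constant depends on the position of the slab (after a shift in time), the same $s_0$ works at every step; after finitely many steps the slabs exhaust $(0,T)$ and we obtain $w\equiv 0$ on $Q$, i.e.\ $u\equiv v$. The main obstacle, and the place where care is needed, is the uniformity bookkeeping: one must check that both the maximal regularity constant for $D_t-\A^{ij}(u)D_{ij}$ and the parabolic Sobolev embedding constant are independent of the length (and position) of the time interval, and that $\A^{ij}(u)$ retains uniform parabolicity along the solution $u$, so that the isomorphism of Remark~\ref{rmk_isom} can be invoked on each subcylinder and not merely at $u_0$.
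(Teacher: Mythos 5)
Your argument is correct in outline and takes a genuinely different route from the paper's. The paper argues by contradiction: it sets $t^{*}=\sup\{t:u(s)=v(s),\ 0\le s\le t\}$, assumes $t^{*}<T$, introduces $\eta=u-v$ on a short slab past $t^{*}$, and after subtracting the two copies of \eqref{PD1} applies the mean value theorem to rewrite the right-hand side as a ``linearized'' operator $\mathcal{N}\eta_\tau=\eta_\tau\int_0^1\F'(v_\tau+\varepsilon\eta_\tau)\,d\varepsilon+D_{ij}v_\tau\,\eta_\tau\int_0^1{\A^{ij}}^\prime(v_\tau+\sigma\eta_\tau)\,d\sigma$; it then invokes uniqueness for the resulting linear problem $D_t\eta_\tau-\A^{ij}(u_\tau)D_{ij}\eta_\tau=\mathcal{N}\eta_\tau$ with zero data at $t^{*}$ (for $\tau$ sufficiently close to $t^{*}$, using the cut-off and the Hölder bound to control $\|\eta\|$) to conclude $\eta_\tau\equiv 0$, contradicting the maximality of $t^{*}$. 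You instead avoid the contradiction framework and the mean value theorem, working directly forward from $t=0$: you bound $|F|\le(L_a|D^2v|+L_f)(|w|+|Dw|)$ via the $\CC^{1,1}$ Lipschitz structure, chain maximal regularity, Hölder's inequality, and the parabolic embedding into the explicit contraction $\|w\|_{W_p(Q_s)}\le C_1 s^{\alpha/2}\|w\|_{W_p(Q_s)}$, and iterate over slabs of fixed length $s_0$. Both proofs exploit precisely the same mechanism — short-time smallness enforced by the $s^{\alpha/2}$ gain in the embedding used in the proof of Theorem~\ref{thm_tau} — but your version makes the quantitative contraction explicit rather than packaging it as ``uniqueness of a linear problem,'' which the paper leaves somewhat vague (since $\mathcal{N}$ depends on $\eta_\tau$, the equation is not literally linear, and injectivity of the resulting linear operator with lower-order terms is not spelled out). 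Your flagged caveat at the end is fair but is equally present, and equally implicit, in the paper's own proof: the coefficient there is $\A^{ij}(u_\tau)$, not $\A^{ij}(u_{0\tau})$, so the uniform parabolicity of $(H_3)$ is being used along the solution $u$ rather than only at $u_0$, and the $s$-uniformity of the maximal-regularity and embedding constants is asserted in the proof of Theorem~\ref{thm_tau} without elaboration. On balance your route is cleaner and more self-contained; what the paper's $t^{*}$-framing buys is that it only ever works on one short slab, so it never needs to articulate the slab-to-slab uniformity that you correctly identify as the bookkeeping burden of the iteration.
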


\begin{proof}
Since $u(0)=v(0)=0$ and 
    $u,$ $v$ are continuous in time,  define 
    $$
        t^* = \sup\{\, t \in [0,T] : u(s) = v(s), \ \forall\, 0 \leq s \leq t \,\}.
    $$
    Clearly $t^*\geq 0.$
     We aim to show that $t^* = T$. 
     
     Assume the contrary, that $t^* < T$. Choose $\tau \in (t^*, T)$ and
     set
    $$
    Q_\tau^*=\Omega\times (t^*,\tau), \qquad 
        \eta(x,t) := u(x,t) - v(x,t). 
    $$
   Then  $\eta(x,t^*) = 0$ and $\eta\not\equiv 0$ on $Q^*_\tau$ for small $\tau-  t^*$.

   Thus, $\eta$ solves \eqref{PD1} in $Q_\tau^*$ with zero initial data.

By the parabolic embedding (cf.~\eqref{eq-emb} applied on $(t^*,\tau)$), we have for $s\in(t^*,\tau]$,
$$
\|\eta(\cdot,s)\|_{C^1(\overline\Omega)} \le C (\tau-t^*)^{\alpha/2}\|\eta\|_{W_p(Q^*_\tau)}.
$$
Hence 
\begin{equation}\label{eq-epsilon}
\|\eta\|_{C([t^*,\tau];C^1(\overline\Omega))}\leq C (\tau-t^*)^{\alpha/2}\|\eta\|_{W_p(Q^*_\tau)}<\varepsilon
\end{equation} 
for $\tau$  close enough to $t^*.$

Subtracting the equations for $u_\tau$ and $v_\tau$ (the restrictions of $u,v$ on $Q_\tau^*$) gives
$$
D_t \eta_\tau - \A^{ij}(u_\tau) D_{ij}\eta_\tau
= \F(u_\tau)-\F(v_\tau) + \big(\A^{ij}(v_\tau)-\A^{ij}(u_\tau)\big)D_{ij}v_\tau .
$$
    Using the mean value theorem (cf. \cite{GRk1}), this can be rewritten   as 
    \begin{align}
    \nonumber
        D_t \eta_\tau - \A^{ij}(u_\tau) D_{ij} \eta_\tau
        &= \int_0^1 \F'\left(v_\tau + \sigma \eta_\tau\right)\,d\sigma \cdot \eta_\tau \\
        \label{eq-N}
        &+ D_{ij} v_\tau \int_0^1 \A^{ij\,\prime}\left(v_\tau + \sigma \eta_\tau\right)\,d\sigma \cdot \eta_\tau  =:   \mathcal{N} \eta_\tau.
    \end{align}

    By \eqref{eq-emb},  the operator  $\mathcal{N}$ is a linear completely continuous Volterra operator 
    $$
    \mathcal{N}: C((t^*,\tau];C^1(\overline\Omega)) \to L^p(Q_\tau^*).
    $$
    Since  $\mathcal{L}_0:=D_t-\mathcal{A}^{ij}(u_\tau)$ is an isomorphism,  then 
    $$
    \mathcal{L}:=D_t-\mathcal{A}^{ij}(u_\tau) -\mathcal N=\mathcal{L}_0-\mathcal{N} : \ W_p(Q_\tau^*)\to L^p(Q^*_\tau)
    $$
    is Fredholm of index zero.
    It remains to show that $\ker\mathcal L=\{0\}$. Let $\mathcal L_0^{-1}$ denote the bounded inverse operator  of the principal part
$
    \mathcal L_0$ 
        whose boundedness follows from the maximal regularity (isomorphism property)  established above. From the representation
    $$
    \mathcal L = \mathcal L_0(I - \mathcal L_0^{-1}\mathcal N),
    $$
    any $\eta\in\ker\mathcal L$ satisfies
    $ 
    \eta = \mathcal L_0^{-1}\mathcal N\eta .
    $ 
    
    To estimate the operator norm of $\mathcal L_0^{-1}\mathcal N$ we use  its  mean-value representation \eqref{eq-N} and the local Lipschitz properties of $\F'$ and $\A^{ij\,\prime}$ (see Lemma~\ref{lm2}). Hence there is a constant $C$ (depending on the $W_p(Q^*_\tau)$ norm of   $v_\tau$ ) such that for every $w\in W_p(Q_\tau^*),$ by \eqref{eq-epsilon} we get
    $$
    \|\mathcal N w\|_{L^p(Q_\tau^*)} \le C\|w\|_{C([t^*,\tau];C^1(\overline\Omega))}\leq 
     C_1 (\tau-t^*)^{\alpha/2} \|w\|_{W_p(Q_\tau^*)}
  $$
  with $C_1$ independent of $\tau.$
    Therefore
    $$
    \|\mathcal L_0^{-1}\mathcal N\|_{\mathcal L(W_p,L^p)} \le \|\mathcal L_0^{-1}\|_{\mathcal L(L^p,W_p)}\, C_1  (\tau-t^*)^{\alpha/2} =: C_2 (\tau-t^*)^{\alpha/2}.
    $$
    Since $C_2$ is independent of $\tau$, we can choose $\tau-t^*$ small enough such  that
    $$
    \|\mathcal L_0^{-1}\mathcal N\| < 1.
 $$
    For such $\tau$ the operator $I - \mathcal L_0^{-1}\mathcal N$ is invertible on $W_p(Q_\tau^*)$  and hence
    $$
    \mathcal L = \mathcal L_0 (I - \mathcal L_0^{-1}\mathcal N)
    $$
    is invertible. In particular $\ker\mathcal L=\{0\}$ in $Q_\tau^*$, so the only solution of the homogeneous problem in $W_p(Q_\tau^*)$ with zero initial data is $\eta\equiv 0$.
    
    This contradicts the definition of $t^*$ (recall we supposed $\eta\not\equiv0$ on $Q_\tau^*$), therefore the assumption $t^*<T$ is false and thus $t^*=T$. Consequently $u\equiv v$ in $Q$, which proves uniqueness.
\end{proof}

We are now in a position to establish our main result. The following theorem is an application of the Implicit Function Theorem,  and it asserts that, for all sufficiently small perturbations $\tilde{a}_{ij}$ and $\tilde{f}$ of the coefficients $a^{ij}$ and $f$, the perturbed problem \eqref{PD_pert} admits a unique solution. Moreover, this solution depends in a $C^1$-smooth manner on the perturbations.

\begin{thm}\label{thm_3.7}
Let $\mathcal{U} \subset \R \times \R^n$ be an open set, and let $\tau \in (0,T)$ be as in Theorem~\ref{thm_tau}. Suppose that $u_{0\tau}$ is a local-in-time solution to \eqref{PD1} under the assumptions $(H_1)-(H_6)$.  Let $K \subset \mathcal{U}$ be compact such that $(u_{0\tau}, Du_{0\tau}) \in K$ for almost every $(x,t) \in Q_\tau$. Then there exist:
\begin{itemize}
    \item a neighbourhood $\mathcal V_\tau \subset \CC^1(Q_\tau \times \overline{\mathcal{U}})^{n^2} \times \CC(Q_\tau \times \overline{\mathcal{U}})$ of $(0,0)$,
    \item a neighbourhood $\mathcal W_\tau \subset \D_\tau \cap W_p(Q_\tau)$ of $u_{0\tau}$,
    \item a $C^1$-map $\Phi: \mathcal V_\tau \to \mathcal W_\tau$ with $\Phi(0,0) = u_{0\tau}$,
\end{itemize}
such that for each  $(\{\tilde{a}^{ij}\}_{i,j=1}^n, \tilde{f}) \in \mathcal V_\tau$ a function $u_\tau \in \mathcal W_\tau$  is a solution to the perturbed problem
\begin{equation}\label{PD_pert_tau}
\begin{cases}
D_t u_\tau - \left(a^{ij}(x,t,u_\tau,Du_\tau) + \tilde{a}^{ij}(x,t,u_\tau,Du_\tau)\right) D_{ij} u_\tau  \\
\hfill =f(x,t,u_\tau,Du_\tau) + \tilde{f}(x,t,u_\tau,Du_\tau), & \text{in } Q_\tau, \\
u_\tau = 0, & \text{on } \partial_p Q_\tau,
\end{cases}
\end{equation}
if and only if $u_\tau = \Phi(\{\tilde{a}^{ij}\}_{i,j=1}^n, \tilde{f})$.
\end{thm}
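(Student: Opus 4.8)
The plan is to recast the perturbed problem \eqref{PD_pert_tau} as the vanishing of a single $\mathcal{C}^1$-map defined on a product of Banach spaces, and then to invoke the Implicit Function Theorem exactly as in the proof of Theorem~\ref{Thm4.4}, now with the perturbation pair $(\{\tilde a^{ij}\},\tilde f)$ playing the role previously played by the perturbation $w$. First I would, without loss of generality, reduce to the case $u_{0\tau}=0$ by the same translation used in Remark~\ref{rmk_isom}, so that $f(x,t,0,0)\in L^p(Q_\tau)$ and $a^{ij}(x,t,0,0)\in L^\infty(Q_\tau)\cap VMO_x$ by $(H_3)$–$(H_4)$. Then I would define, on a neighbourhood of $(u_\tau,\tilde a,\tilde f)=(0,0,0)$ in $\big(W_\tau\cap W_p(Q_\tau)\big)\times V_\tau$, the operator
\begin{equation*}
\mathcal{G}(u_\tau,\tilde a,\tilde f):=D_t u_\tau-\big(a^{ij}(x,t,u_\tau,Du_\tau)+\tilde a^{ij}(x,t,u_\tau,Du_\tau)\big)D_{ij}u_\tau-f(x,t,u_\tau,Du_\tau)-\tilde f(x,t,u_\tau,Du_\tau),
\end{equation*}
taking values in $L^p(Q_\tau)$, so that \eqref{PD_pert_tau} is equivalent to $\mathcal{G}(u_\tau,\tilde a,\tilde f)=0$.

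The three verifications needed to apply the IFT are: (i) $\mathcal{G}(0,0,0)=0$, which is immediate since $u_{0\tau}=0$ solves the unperturbed problem \eqref{PD1} by Theorem~\ref{thm_tau}; (ii) $\mathcal{G}$ is of class $\mathcal{C}^1$ in a neighbourhood of $(0,0,0)$; and (iii) the partial Fréchet derivative $\partial_{u_\tau}\mathcal{G}(0,0,0)$ is a linear isomorphism of $W_p(Q_\tau)$ onto $L^p(Q_\tau)$. For (ii) the dependence on $u_\tau$ is handled by Lemma~\ref{lm2} and Remark~\ref{rem2} applied on $Q_\tau$, while the dependence on $(\tilde a,\tilde f)$ is affine (the map $(u_\tau,\tilde a)\mapsto \tilde a^{ij}(x,t,u_\tau,Du_\tau)D_{ij}u_\tau$ is the composition of the $\mathcal{C}^1$ superposition map $\tilde a\mapsto \tilde a^{ij}(x,t,u_\tau,Du_\tau)$ with multiplication by $D_{ij}u_\tau\in L^p$, jointly $\mathcal{C}^1$ into $L^p$), hence jointly $\mathcal{C}^1$. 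For (iii), the derivative at the reference point is
\begin{equation*}
\partial_{u_\tau}\mathcal{G}(0,0,0)v=D_t v-a^{ij}(x,t,0,0)D_{ij}v-\big({\A^{ij}}^\prime(0)D_{ij}(0)+\F'(0)\big)v=D_t v-a^{ij}(x,t,0,0)D_{ij}v-\F'(0)v,
\end{equation*}
since the term $D_{ij}(0)=0$ drops out; the terms coming from $\tilde a,\tilde f$ contribute nothing at $\tilde a=\tilde f=0$ because they enter multiplied by $D_{ij}u_\tau$ or evaluate to the zero perturbation. By $(H_3)$ and Remark~\ref{rmk_isom} this operator is a linear isomorphism $W_p(Q_\tau)\to L^p(Q_\tau)$ (the lower-order term $\F'(0)v$ being a compact perturbation that does not destroy the isomorphism property, as it already appears in \eqref{FLP} and is covered by the Fredholm-of-index-zero discussion together with uniqueness from the previous theorem).

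The IFT (\cite{RPS,Z}) then yields a neighbourhood $V_\tau$ of $(0,0)$, a neighbourhood $W_\tau$ of $u_{0\tau}$, and a unique $\mathcal{C}^1$-map $\Phi:V_\tau\to W_\tau$ with $\Phi(0,0)=u_{0\tau}$ such that $\mathcal{G}(u_\tau,\tilde a,\tilde f)=0$ with $(u_\tau,\tilde a,\tilde f)\in W_\tau\times V_\tau$ holds precisely when $u_\tau=\Phi(\{\tilde a^{ij}\},\tilde f)$; shrinking $W_\tau$ if necessary we keep $W_\tau\subset D_\tau\cap W_p(Q_\tau)$, and shrinking $V_\tau$ keeps the perturbed coefficients within the ellipticity and $VMO_x$-smallness regime needed for the isomorphism to persist (this is exactly the estimate $\overline{\A}_{ij}^{\#(x)}(r)\le 2L_a\varepsilon+a^{\#(x)}(r)$ from the proof of Theorem~\ref{Thm4.4}, now reading $\|\tilde a^{ij}\|_{\mathcal{C}^1}$ in place of $\varepsilon$). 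The main obstacle I anticipate is verifying (iii) carefully, namely that $\partial_{u_\tau}\mathcal{G}(0,0,0)$ remains an isomorphism once the lower-order term $\F'(0)$ is included: one must argue that it is Fredholm of index zero (via the maximal-regularity isomorphism for the principal part $D_t-a^{ij}(x,t,0,0)D_{ij}$ from $(H_3)$ plus compactness of the embedding $W_p(Q_\tau)\hookrightarrow \mathcal{C}([0,\tau];\mathcal{C}^1(\overline\Omega))$ that makes $v\mapsto \F'(0)v$ a compact operator into $L^p$) and then that its kernel is trivial, which is precisely the content of the preceding uniqueness theorem applied to the linearized homogeneous problem. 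A secondary technical point is ensuring the compact $K$ and the neighbourhoods are chosen so that $(u_\tau,Du_\tau)$ stays inside $K\subset\mathcal{U}$ for all $u_\tau\in W_\tau$, which follows from the $\mathcal{C}([0,\tau];\mathcal{C}^1(\overline\Omega))$-continuity of the embedding together with openness of $\mathcal{U}$, exactly as in Lemma~\ref{lm2}(i).
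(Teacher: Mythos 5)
Your proposal follows essentially the same route as the paper: recast \eqref{PD_pert_tau} as the zero set of a $\mathcal{C}^1$-map $\widetilde{\mathcal{P}}$ (your $\mathcal{G}$) on a product of Banach spaces, check the base point, compute $\partial_{u_\tau}\widetilde{\mathcal{P}}$ at the reference solution, split it into a principal part that is an isomorphism by $(H_3)$ and maximal regularity plus a lower-order part that is compact by the embedding $W_p(Q_\tau)\hookrightarrow W^{1,p}_x$ (the paper) or $W_p(Q_\tau)\hookrightarrow C([0,\tau];C^1(\overline\Omega))$ (you), conclude Fredholm of index zero, establish injectivity, and invoke the IFT. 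The only structural differences are (a) your WLOG reduction to $u_{0\tau}=0$, which the paper does not use in this proof and which forces the $D_{ij}u_{0\tau}$ term to drop out of the linearization — a harmless simplification already implicit in Remark~\ref{rmk_isom} — and (b) your justification of injectivity. On point (b) you should be careful: the paper appeals to hypothesis $(H_4)$ directly, whereas you appeal to "the preceding uniqueness theorem applied to the linearized homogeneous problem." That theorem establishes uniqueness for the \emph{nonlinear} problem \eqref{PD1}; it does not literally assert that the kernel of the linearized operator $D_t-\A^{ij}(u_{0\tau})D_{ij}-\F'(u_{0\tau})-{\A^{ij}}'(u_{0\tau})D_{ij}u_{0\tau}$ is trivial, and transferring uniqueness from the nonlinear problem to injectivity of its linearization is a nontrivial step that you leave unjustified. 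Either cite $(H_4)$ as the paper does, or spell out why the argument in the uniqueness proof (subtracting equations, mean-value representation) actually yields linear injectivity. Apart from this one loose joint, the proposal is correct and matches the paper's approach.
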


\begin{proof}
Let $\widetilde{\mathbf{A}} = \{\tilde{a}^{ij}\}_{i,j=1}^n \in 
\CC^1(Q_\tau \times \overline{\mathcal{U}})^{n^2}$ denote the perturbation matrix. Define the set
$$
\mathcal{U}_\tau := \left\{\, u_\tau \in D_\tau \cap W_p(Q_\tau) : \  (u_\tau, Du_\tau) \in K \subset \mathcal{U} \, \right\}.
$$
The set $\mathcal{U}_\tau$ is open in $W_p(Q_\tau)$. By assumption $(H_1)$ and Lemma~\ref{lm2}, there exist $C^1$-maps
\begin{align*}
\A^{ij} & : \CC^1(Q_\tau \times \overline{\mathcal{U}})^{n^2} \times \mathcal{U}_\tau \to L^\infty(Q_\tau), \\
\F & : \CC^1(Q_\tau \times \overline{\mathcal{U}}) \times \mathcal{U}_\tau \to L^\infty(Q_\tau),
\end{align*}
such that
\begin{align*}
\A^{ij}(\tilde{a}^{ij}; u_\tau) & := a^{ij}(x,t,u_\tau,Du_\tau) + \tilde{a}^{ij}(x,t,u_\tau,Du_\tau), \quad \A^{ij}(0;u_\tau)=\A^{ij}(u_\tau)\\
\F(\tilde{f}; u_\tau) & := f(x,t,u_\tau,Du_\tau) + \tilde{f}(x,t,u_\tau,Du_\tau) \quad \F(0;u_\tau)=\F(u_\tau).
\end{align*}
Hence, the operator in \eqref{PD_pert_tau} is equivalent to
\begin{equation}\label{PD_pert1}
\begin{cases}
\widetilde{\mathcal{P}}(\tilde{a}^{ij}, \tilde{f}; u_\tau) := D_t u_\tau - \A^{ij}(\tilde{a}^{ij}; u_\tau) D_{ij} u_\tau - \F( \tilde{f}; u_\tau) = 0, \\
\widetilde{\mathcal{P}} \in C^1\big( \CC^1(Q_\tau \times \overline{\mathcal{U}})^{n^2} \times \CC^1(Q_\tau \times \overline{\mathcal{U}}) \times \mathcal{U}_\tau; L^p(Q_\tau) \big).
\end{cases}
\end{equation}
Note that
$$
\widetilde{\mathcal{P}}(0,0; u_{0\tau}) 
= D_t u_{0\tau} - \A^{ij}( u_{0\tau}) D_{ij} u_{0\tau} - \F( u_{0\tau})  = 0.
$$

We aim to solve \eqref{PD_pert1} for $u_\tau$ near $u_{0,\tau} $ with respect to the norm $W_p$ using  the Implicit Function Theorem. For this, we compute the Fréchet derivative of  $\widetilde{\mathcal{P}}$ with respect to $u,$ evaluated at $(0,0;u_{0\tau})$:
\begin{align*}
\widetilde{\mathcal{P}}^\prime(0,0; u_{0\tau}) v_\tau 
&= \big( D_t  - \A^{ij}( u_{0\tau}) D_{ij} \big)v_\tau \\
& - \big(\F^\prime(u_{0\tau})  + \A^{ij\,\prime}(u_{0\tau}) D_{ij} u_{0\tau} \big) v_\tau.
\end{align*}
This operator  is the sum of the following two mappings:
\begin{align}
v_\tau\mapsto \big( D_t - \A^{ij}( u_{0\tau}) D_{ij}\big) v_\tau: & \quad    \D_\tau \cap W_p(Q_\tau) \mapsto  L^p(Q_\tau), \label{map1} \\
v_\tau\mapsto \big(\F^\prime ( u_{0\tau})  + \A^{ij\prime} ( u_{0\tau}) D_{ij} u_{0\tau}\big)  v_\tau: & \quad   \D_\tau \cap W_p(Q_\tau) \mapsto L^p(Q_\tau). \label{map2}
\end{align}

As discussed above, the operator \eqref{map1} is isomorphism thanks to $(H_3),$ $(H_4),$ $(H_6),$ and the maximum principle.
   The second  mapping   \eqref{map2} is bounded  multiplication operator, which  is compact due to the compactness of the  embedding $W_p (Q_\tau)\hookrightarrow L^p(Q_\tau).$ 
   Hence   
 $\widetilde{\mathcal{P}}^\prime(0,0; u_{0\tau})$ is a Fredholm operator  of index zero from $\D_\tau \cap W_p(Q_\tau)$ into $L^p(Q_\tau),$ being a  sum of an isomorphism and a compact operator.

Moreover, hypothesis $(H_5)$ ensures that this operator is injective.  Indeed, by $(H_5)$ the unperturbed linearized problem
\begin{equation}
\label{q-unprt}
\begin{cases}
D_t v_\tau - \A^{ij}( u_{0\tau}) D_{ij} v_\tau&\\ 
\qquad\qquad\qquad - \big(\F^\prime( u_{0\tau}) + \A^{ij\prime}(u_{0\tau}) D_{ij} u_{0\tau}\big)v_\tau = 0, &  \text{ in } Q_\tau\\
 v_\tau=0 & \text{ on } \partial_p Q_\tau,
\end{cases}
\end{equation}
admits only the trivial solution $v_\tau \equiv 0$ in $\D_\tau \cap W_p(Q_\tau)$.  
In other words,
$$
\ker \big(\widetilde{\mathcal{P}}^\prime(0,0; u_{0\tau})\big) = \{0\}.
$$
Since $  \widetilde{\mathcal{P}}^\prime(0,0; u_{0\tau})$ is Fredholm of index zero and injective, it is also surjective.  Thus 
$$
\widetilde{\mathcal{P}}^\prime(0,0; u_{0\tau}) \in \mathbf{Iso}(\D_\tau \cap W_p(Q_\tau), L^p(Q_\tau)).
$$

Applying the Implicit Function Theorem to  $\widetilde{\mathcal{P}},$ we conclude that   there exist neighbourhoods $\mathcal V_\tau$ and $\mathcal W_\tau$ of $(0,0)$ and $u_{0\tau},$ respectively, and a $C^1$ map $\Phi:\mathcal V_\tau\to \mathcal W_\tau ,$ such that for all $(\{\tilde{a}^{ij}\}, \tilde{f}) \in \mathcal V_\tau $  the function $u_\tau$ is a solution of  \eqref{PD_pert1} if and only if  
$$
u_\tau =\Phi(\{\tilde{a}^{ij}\}_{i,j=1}^n, \tilde{f}).
$$
\end{proof}


\section{Application of the Newton Iteration Procedure}

The previous results allow us to apply the {\it Newton Iteration Procedure} to construct a sequence of approximate solutions converging to the actual solution of the quasilinear parabolic Cauchy-Dirichlet problem \eqref{PD}.
 For,  consider its formal linearization at  $u=u_0$:

\begin{equation} \label{PD-linearized}
\begin{cases}
\mathcal P^\prime(u_0)v=  D_t v - a^{ij}(x, t, u_0, Du_0) D_{ij} v\\
\quad - \left[ D_{\xi_l}a^{ij}(x,t,u_0,Du_0)D_{ij}u_0 + D_{\xi_l}f(x,t,u_0,Du_0)\right]D_l v\\
\quad - \left[ D_u a^{ij}(x,t,u_0,D u_0)D_{ij}u_0 + D_u f(x,t,u_0,Du_0) \right] v = 0 & \text{a.e. in } Q, \\
v(x, t) = 0 & \text{on } \partial_p Q.
\end{cases}
\end{equation}

We define the Newton iteration scheme in the following way: for a given initial function $u_1$, define the sequence $\{u_k\}$ by solving the linear problems
\begin{equation}\label{PD_k}
\begin{cases}
\mathcal{P}'(u_k)(u_{k+1} - u_k) = -\mathcal{P}(u_k) & \text{ in } Q, \\
u_{k+1} = 0 & \text{ on } \partial_p Q.
\end{cases}
\end{equation}

Denote by $ \mathscr{D}_p $ the set  of all symmetric matrix  $\{a^{ij}\}$ such that
$$
\mathscr{D}_p := \Big\{ \{a^{ij}\}_{i,j=1}^n \in L^\infty(Q)^{n^2} : \exists\, \lambda > 0  \text{ s.t. } a^{ij}(x,t)\eta_i\eta_j \geq \lambda|\eta|^2 \text{ a.e. in } Q,\, \forall \eta \in \mathbb{R}^n \Big\},
$$
and for which the operator 
$$
u \mapsto (D_t - a^{ij} D_{ij})u : W_p(Q) \to L^p(Q)
$$ 
is an isomorphism.

In the following theorem we follow  \cite[Theorem~15.6]{deimling} and  \cite[Theorem~5A]{Z}) to investigate the   convergence of the Newton iteration \eqref{PD_k}.
\begin{thm}
Suppose the following assumptions  hold
\begin{itemize}
    \item[$(H'_1)$] The functions $a^{ij},  f:Q\times \R\times \R^n \to \R$ are $\CC^{1,1}$-Carath\'eodory functions.
     
      \item[$(H'_2)$]  $u_0 \in  W_p(Q)$ is a solution of the problem \eqref{PD}.
     
     \item[$(H'_3)$]      $\{a^{ij}(x,t,u_0, D u_0)\}_{i,j=1}^n\in \mathscr{D}_p$

      \item[$(H'_4)$] There are no non-trivial solutions $v\in W_p(Q)$ of  \eqref{PD-linearized}.
\end{itemize}

 Then there exists a neighbourhood $\mathcal W \subset W_p(Q)$ of $u_0$ such that for every $u_1 \in \mathcal W$, the Newton sequence
\begin{equation}\label{eq-NI}
u_{k+1}    =u_k -\mathcal{P}'(u_k)^{-1} \mathcal{P}(u_k), \qquad k\geq 1 
\end{equation}
generated by the Newton iteration \eqref{PD_k} is well-defined, $u_k\in \mathcal W$ for all $k\in \mathbb{N},$ and converges to $u_0$ in $W_p(Q)$ with quadratic rate.
\end{thm}

\begin{proof}
Assumptions $(H'_3)$ and $(H'_4)$ ensure that for each $u \in W_p(Q)$, the operator $\mathcal{P}'(u)$ belongs to $\mathrm{Iso}(W_p(Q), L^p(Q))$ and that the map $u \mapsto \mathcal{P}'(u)$ is Lipschitz continuous in a neighbourhood $\mathcal W$ of $u_0$.
In fact, by  the compactness of the  embedding $W_p(Q) \hookrightarrow W^{1,\infty}_x(Q)$ (valid for $p>n+2$) and by \eqref{eq-operators}, we can estimate for any  $u,v, w \in  W_p(Q)$ 
\begin{equation}\label{eq-Pprimo}
\begin{cases}
\| (\mathcal{P}'(u) - \mathcal{P}'(v)) w \|_{p,Q}
&\leq C \|u - v\|_{W^{1,\infty}_x(Q)} \|w\|_{W_p(Q)} \\
&\leq L \|u - v\|_{W_p(Q)} \|w\|_{W_p(Q)},
\end{cases}
\end{equation}
where the  constant $L$ depends on $L_{\mathcal A}$ and  $L_{\mathcal F}$ from \eqref{eq-operators}.

Since  $\mathcal{P}'(u_0)$ is an isomorphism from $W_p(Q)$ onto $L^p(Q)$ we  set
$$
M:=\|\mathcal{P}'(u_0)^{-1}\|_{\mathcal{L}(L^p(Q),W_p(Q))} <\infty.
$$
Let 
$$
\mathcal W_{r_0}:= \{\, u\in W_p(Q) : \|u-u_0\|_{W_p(Q)}< r_0\,\}.
$$ 
By the continuity of $\mathcal{P}'$ we may choose   $r_0>0$ small enough   in order to ensure  that for any $u\in \mathcal W_{r_0}$
the operator $\mathcal{P}'(u)$ is invertible and
\begin{equation}\label{eq-bound}
\|\mathcal{P}'(u)^{-1}\|_{\mathcal{L}(L^p(Q),W_p(Q))} < 2M.
\end{equation}

Pick  $\rho \in (0,r_0]$  such that $3ML\rho < 1/2$ and  consider
$\mathcal W_\rho \subset \mathcal W_{r_0}.$ 
We will show that for every $u_1\in \mathcal W_\rho$ the Newton iterates $\{u_k\}$ are well-defined and converge quadratically to $u_0.$

Setting   $\epsilon_k := u_k - u_0,$  and suppose initially  that all $u_k\in \mathcal W_\rho.$ Then  we have the expansion
$$
\mathcal{P}(u_k)=\mathcal{P}(u_0+\epsilon_k)=\mathcal{P}(u_0)+\mathcal{P}'(u_0)\epsilon_k+ \mathcal{R}(u_k,u_0)
$$
where the reminder $\mathcal{R}(u_k,u_0)$ satisfies the estimate (cf. \cite{deimling,Z})
\begin{equation}\label{eq-reminder}
\|\mathcal{R}(u_k,u_0)\|_{p,Q}\leq \frac{L}{2}\|\epsilon_k\|^2_{W_p(Q)}
\end{equation}
with 
 $\|\epsilon_k\|_{W_p(Q)}< \rho.$

Transforming the Newton sequences \eqref{eq-NI} we obtain 
\begin{equation}\label{eq-alpha-k}
\begin{split}
\epsilon_{k+1}&=-\mathcal{P}'(u_k)^{-1} (\mathcal{P}(u_k)-\mathcal{P}'(u_k)\epsilon_k)\\
&=-\mathcal{P}'(u_k)^{-1}
\big[(\mathcal{P}'(u_0)-\mathcal{P}'(u_k))\epsilon_k + \mathcal{R}(u_k,u_0)\big],
\end{split}
\end{equation}
since $\mathcal{P}(u_0)=0.$
Applying  \eqref{eq-reminder} along the segment between $u_0$ and $u_k,$ contained in $\mathcal W_\rho,$ and combining this with \eqref{eq-Pprimo},
     \eqref{eq-bound},  and \eqref{eq-alpha-k},  we find
\begin{equation}\label{eq-quadratic}
         \|\epsilon_{k+1}\|_{W_p(Q)} \leq 3ML\|\epsilon_k\|^2_{W_p(Q)}
    \leq (3ML)^{2^k-1}\|\epsilon_1\|^{2^k}_{W_p(Q)}.
\end{equation}
Hence  the convergence is \emph{locally quadratic}.

We now show  that the sequence $\{u_k\}$ indeed  lies     in $\mathcal W_\rho.$
Taking  $u_1\in \mathcal W_\rho,$ 
by  \eqref{eq-quadratic} we get 
$$
\|\epsilon_2\|_{W_p(Q)} <3ML \|\epsilon_1\|^2_{W_p(Q)}< 3ML\rho \|\epsilon_1\|_{W_p(Q)}<\frac12 \|\epsilon_1\|_{W_p(Q)}<\rho
$$
so $u_2\in \mathcal W_\rho.$ By induction, suppose that  $\|\epsilon_j\|_{W_p(Q)}<\rho$ for  all  $j=1,2,\ldots k.$ Then  
\begin{align*}
\|\epsilon_{k+1}\|_{W_p(Q)}&<3ML\rho\|\epsilon_k\|_{L^p(Q)}< \frac12 \|\epsilon_k\|_{W_p(Q)}< \frac{\rho}{2}
\end{align*}
and therefore   $u_{k+1}\in  \mathcal W_\rho,$ that means that  the iteration never  leaves $\mathcal W_\rho.$ 

Finally, from  \eqref{eq-quadratic} we deduce that  the sequence $\{u_k\}$ converges   to $u_0$ in the norm of $W_p(Q),$ namely
$$
\|u_k - u_0\|_{W_p(Q)} \leq (3ML)^{2^{k-1}-1}\|u_1-u_0\|_{W_p(Q)}^{2^k}. 
$$

\end{proof}

\subsection*{Acknowledgment}
The authors are members of INDAM-GNAMPA and the research is partially supported by the project: GNAMPA 2025 - CUP E5324001950001  "Regularity of the solutions of parabolic equations with non-standard degeneracy".
The research of L.Softova is partially
supported by the MIUR PRIN 2022,\\
Grant No.~D53D23005580006 (Salerno Unity).

The research conducted by the authors was partially carried out during their visit to the {\it Institute of Mathematics of the Czech Academy of Sciences}. The authors gratefully acknowledge the staff for their hospitality.

The authors are indebted to the referees for their valuable remarks, which improved the exposition and clarified the proofs.

\end{document}